\documentclass[12pt,reqno]{article}

\usepackage[usenames]{color}
\usepackage{amssymb}
\usepackage{amsmath}
\usepackage{amsthm}
\usepackage{amsfonts}
\usepackage{amscd}
\usepackage{graphicx}
\usepackage{diagbox}
\usepackage{array}
\usepackage{xcolor}
\usepackage[colorlinks=true,
linkcolor=webgreen,
filecolor=webbrown,
citecolor=webgreen]{hyperref}

\usepackage{doi}

\usepackage[backend=biber,style=numeric]{biblatex}
\definecolor{webgreen}{rgb}{0,.5,0}
\definecolor{webbrown}{rgb}{.6,0,0}
\usepackage{url}
\usepackage{caption}
\usepackage{color}
\usepackage{fullpage}
\usepackage{float}

\usepackage{graphics}
\usepackage{latexsym}
\usepackage{epsf}

\usepackage[margin=1in]{geometry} 
\usepackage{tikz}
\usetikzlibrary {arrows.meta}
\usetikzlibrary{decorations.pathmorphing,shapes.geometric,positioning}
\tikzset{note/.style={font=\small}}
\newcommand{\seqnum}[1]{\href{https://oeis.org/#1}{\rm \underline{#1}}}

\theoremstyle{plain}
\newtheorem{theorem}{Theorem}

\newtheorem{lemma}[theorem]{Lemma}
\newtheorem{proposition}[theorem]{Proposition}

\theoremstyle{definition}
\newtheorem{definition}[theorem]{Definition}
\newtheorem{example}[theorem]{Example}

\theoremstyle{remark}

\begin{document}
\begin{center}
\vskip 1cm{\LARGE\bf Some Generalizations of the Bridge and Torch Problem
}
\vskip 1 cm
\large
Thang Pang Ern\\
Department of Mathematics \\
National University of Singapore, 10 Lower Kent Ridge Road, Singapore 119076 \\
\href{mailto:thangpangern@u.nus.edu}{\tt thangpangern@u.nus.edu}
\vskip 1 cm
\large
Gerard Sayson\\
Nanyang Junior College \\
128 Serangoon Avenue 3, Singapore 556111 \\
\href{mailto:gerard@gsn.bz}{\tt gerard@gsn.bz}
\end{center}

\begin{abstract} For the classic bridge and torch problem with crossing times $\left\{1,\ldots,n\right\}$, we derive a closed-form expression for the optimal crossing time $T\left(n\right)$ using a recurrence relation derived from the problem's optimal substructure, thus obtaining \[T\left(n\right)=\frac{n^2}{4}+3n-5+\frac{\left(-1\right)^n-1}{8}\]
which holds for all $n\ge 2$. We generalize the problem to a bridge of capacity 3 and obtain the optimal crossing time \[T_3\left(n\right)=\frac{n^{2}}{6}+2n-\frac{181}{36}+\frac{\left(-1\right)^{n}}{4}-\frac{2}{9}\cos\left(\frac{2n\pi}{3}\right)\]
which holds for all $n\ge 7$. As such, we obtain a new sequence
\seqnum{A392834}
in the On-Line Encyclopedia of Integer Sequences. Lastly, we also explore this problem for star graphs, and see how we can recover some classic identities involving the sum of floor functions.
\end{abstract}

\section{Introduction}
The bridge and torch problem is an interesting scheduling \emph{puzzle}: a group of people must cross a single bridge at night with one torch. At most two people can be on the bridge at any time, the torch must be carried on every crossing, and a pair moves at the speed of its slower member. Assign each person a crossing time; the goal is to move everyone from the left side to the right side in minimum total time (Figure \ref{fig:Visualising the bridge and torch problem}).

This problem has appeared in many forms over the years with various anecdotes attached to it. There are numerous names such as the bridge-crossing puzzle, the four men problem, the flashlight puzzle etc. The oldest reference of this classic problem is a puzzle book by Levmore and Cook \cite{LevmoreCook1981}. The well-known version with four people whose crossing times are $1,2,5,10$ units is perhaps the most common \cite{Rote2002}. Beyond being a fun puzzle, the bridge and torch problem is also a neat example of an optimisation task with constraints and trade-offs. With only a few simple rules, it forces careful thought about order and timing. It has also often been used to introduce ideas from computer science, such as dynamic programming.

In this paper, we extend the bridge and torch problem for crossing times $\left\{1,2,\dots,n\right\}$ to the capacity $c = 3$ case and develop steps for further generalizations. We also move beyond the simple linear `left bank to right bank' model to explore the problem on star graphs $\mathcal{S}(k)$
for $k$ leaves, allowing for multiple torches.

\begin{figure}[H]
\centering
\begin{tikzpicture}[
  scale=1,
  person/.style={circle, draw, fill=white, minimum size=6mm, inner sep=0pt, font=\sffamily},
  bank/.style={fill=green!18, draw=green!40!black},
  water/.style={fill=blue!10, draw=blue!40!black},
  bridge/.style={fill=gray!25, draw=gray!55}
]

\draw[water] (2,0) rectangle (12,6);

\draw[bank] (0,0) rectangle (2,6);
\draw[bank] (12,0) rectangle (14,6);

\node at (1,6.35) {Left of bridge};
\node at (13,6.35) {Right of bridge};

\draw[bridge, line join=round] (2,2.5) rectangle (12,3.5);
\foreach \x in {3,4,5,6,7,8,9,10,11}{
  \draw[gray!60] (\x,2.5) -- (\x,3.5);
}
\draw[gray!70,line cap=round] (2,3.65) -- (12,3.65);
\draw[gray!70,line cap=round] (2,2.35) -- (12,2.35);

\foreach \y in {1.0, 1.8, 4.2, 5.0}{
  \draw[blue!45,decorate,decoration={snake,segment length=12mm, amplitude=0.6mm}] (2,\y) -- (12,\y);
}

\def\yA{5.1}
\def\yB{4.0}
\def\yC{2.9}
\def\yD{1.8}

\node[person] (p1) at (1,\yA) {1};
\node[person] (p2) at (1,\yB) {2};
\node[person] (p3) at (1,\yC) {3};
\node[person] (p4) at (1,\yD) {4};

\node[star,star points=5,star point ratio=2,fill=yellow!85!orange,draw=orange!70,minimum size=5mm] (torch) at (0.5,0.7) {};
\node[note,anchor=west] at (0.8,0.7) {torch};

\foreach \y in {\yA,\yB,\yC,\yD}{
  \node[person,draw=gray!50,fill=gray!10] at (13,\y) {};
}

\draw[-{Latex[length=2.4mm]},very thick,gray!60]
  (2.2,3.0) -- (11.8,3.0);
\node[note] at (8.7,3.25) {crossing path};

\node[person,minimum size=5mm] at (2.6,0.4) {};
\node[note,anchor=west] at (2.9,0.4) {node label = crossing time};

\end{tikzpicture}
\caption{Visualising the bridge and torch problem}
\label{fig:Visualising the bridge and torch problem}
\end{figure}
\section{The capacity-two case}
\begin{theorem}\label{theorem one bridge one torch with crossing times 1 to n}
Let there be $n$ persons on one side of the bridge with crossing times $\left\{1,\ldots,n\right\}$. Define the optimal crossing time to be $T\left(n\right)$. Then, \begin{align}\label{theorem: optimal crossing time}
T\left(n\right)=\frac{n^2}{4}+3n-5+\frac{\left(-1\right)^n-1}{8}\quad\text{for all positive integers }n\ge 2.
\end{align}
\end{theorem}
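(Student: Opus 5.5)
We will use the classical optimal-substructure result for the bridge and torch problem (due to Rote and others). With sorted crossing times $t_1\le t_2\le\cdots\le t_n$, the optimal time satisfies $T(1)=t_1$, $T(2)=t_2$, $T(3)=t_1+t_2+t_3$, and for $n\ge 4$
\begin{align*}
T(n)=\min\bigl\{T(n-1)+t_1+t_n,\; T(n-2)+t_1+2t_2+t_n\bigr\}.
\end{align*}
The first option has $1$ escort the slowest person across and return. The second has $1,2$ cross, $1$ return, $n-1,n$ cross, and $2$ return. Both moves leave the slowest one or two people on the far side with the torch back at the start.

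Proving this recurrence is the main obstacle. If it cannot be cited, I would prove it by an exchange argument. One shows that some optimal schedule sends the two slowest people across either together or each escorted by person $1$. In the capacity-two setting, the two standard pairings above are then the only candidates. A second route is Rote's lower-bound argument, which counts forward trips and shows that returns are made by the fastest people.

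Now specialize to $t_i=i$. The recurrence becomes
\begin{align*}
T(n)=\min\{T(n-1)+n+1,\;T(n-2)+n+5\}.
\end{align*}
The initial values are $T(2)=2$ and $T(3)=6$.

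Next I compare the two options. Taking two single steps from $n-2$ costs $(n+1)+n=2n+1$, while one double step costs $n+5$. So the double step is better exactly when $n>4$, and the two tie at $n=4$. This suggests that for $n\ge4$ the optimum is $T(n)=T(n-2)+n+5$.

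To make this rigorous, I will prove by strong induction that the closed form $F(n)$ on the right-hand side satisfies the recurrence, with the second option achieving the minimum. I first check the base cases $F(2)=1+6-5=2$ and $F(3)=\tfrac94+9-5-\tfrac14=6$. Then I verify the identity
\begin{align*}
F(n)-F(n-2)=\frac{n^2-(n-2)^2}{4}+6=n+5.
\end{align*}
For the inequality I use
\begin{align*}
F(n-1)+n+1-F(n)=\frac{n-1}{2}-\frac{1}{4}\bigl((-1)^{n-1}-(-1)^n\bigr)+\ldots,
\end{align*}
or more simply $F(n)-F(n-1)=\tfrac{2n-1}{4}+3+\tfrac{(-1)^n}{4}$. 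This equals $\lfloor n/2\rfloor+3$, and we need it to be at most $n+1$, i.e.\ $\lfloor n/2\rfloor\le n-2$. That holds for all $n\ge 3$, with equality exactly at $n=3,4$.

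Hence the minimum in the recurrence is always attained by the second option, and $T(n)=F(n)$ for all $n\ge2$. The parity term $\tfrac{(-1)^n-1}{8}$ simply records that the double steps telescope down to $T(2)$ when $n$ is even and to $T(3)$ when $n$ is odd.
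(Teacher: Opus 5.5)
Your proof is correct, but it takes a different route from the paper. You import the general capacity-two recurrence $T(n)=\min\{T(n-1)+t_1+t_n,\ T(n-2)+t_1+2t_2+t_n\}$. This is the classical result, proved e.g.\ by Rote, whom the paper cites for arbitrary crossing times. You then specialize to $t_i=i$ and check by induction that the closed form $F$ satisfies it. Your computations $F(2)=2$, $F(3)=6$, $F(n)-F(n-2)=n+5$ and $F(n)-F(n-1)=\lfloor n/2\rfloor+3\le n+1$ are all correct. They show the paired option always attains the minimum, tying with the escort option only at $n=4$.

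The paper never writes down the two-option recurrence. It argues directly on canonical states $S_k$ that $T(k)=T(k-2)+k+5$ for $k\ge 4$. The lower bound is $4+k+1$: the cost of having person $1$ on the right with the torch on the left, the move that first puts $k-1,k$ together on the right, and the next return. This is matched by the block $\{1,2\},2,\{k-1,k\},1$, and the recurrence is then unrolled by parity.

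Your route is rigorous modulo a standard citation, and it reduces everything specific to $t_i=i$ to a single inequality. The paper's route is tailored to $t_i=i$ and avoids the general theorem, but all the difficulty then sits in its lower-bound step. That step must handle optimal schedules that never leave $1$ on the right with the torch on the left, such as the all-escort schedule for $n=4$. It must also ensure that nothing charged before reaching $S_{k-2}$ is charged again to $T(k-2)$. That is precisely the exchange reasoning your citation packages.

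Your fallback sketch (``some optimal schedule sends the two slowest together or each escorted by $1$'') restates that theorem rather than proving it. So your argument genuinely rests on the citation.
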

The terms $T\left(1\right),T\left(2\right),T\left(3\right),T\left(4\right),\ldots=1,2,6,11,\ldots$ appear in sequence \seqnum{A078476}. It denotes the set of times taken to get $n$ people from one side of a bridge to the other where (a) the only \emph{flashlight} must be carried when crossing; (b) only one or two people may cross at the same time; (c) a pair crosses at the speed of the slowest member; and (d) the $k^\text{th}$ person's speed requires $k$ units to cross the bridge, where $1\le k \le n$. This sequence is a special case of \seqnum{A318271}, which denotes the optimal crossing time for the bridge and torch problem given that the crossing times for the group's members are given by the $n^\text{th}$ partition of the triangle in which $n^\text{th}$ row lists juxtaposed lexicographically ordered partitions of $n$.

Also, note that equivalently, for all $n\ge 2$,
\begin{align}\label{eqn:formula for T(n)}
    T\left(n\right)=\left\lfloor \frac{n^2+12n-20}{4}\right\rfloor.
\end{align}

Rote proved a generalized version of Theorem \ref{theorem one bridge one torch with crossing times 1 to n} for arbitrary ordered crossing times $0<t_1<\cdots<t_N$ (the present paper treats the special case $t_i=i$). His optimality proof proceeds by translating any feasible schedule into a multigraph optimisation problem, formulating the total crossing time as a weighted objective depending on the multiplicities and degrees of edges, and then classifying the structure of optimal graphs. Finally, he shows that every such optimal graph can be realized by an explicit feasible schedule \cite{Rote2002}. Recently, Jianu, Jianu and Popescu gave a combinatorial proof of the generalized problem involving arbitrary ordered crossing times \cite{JianuJianuPopescu2020}. In contrast, we work with what is known as \emph{canonical states} and derive a direct recurrence that reduces the $n$-person instance to an $\left(n-2\right)$-person instance with an exact additional cost. This yields an optimality proof for the case $t_i=i$. 

Before proving Theorem \ref{theorem one bridge one torch with crossing times 1 to n}, we state Lemma \ref{lemma: odd even sequence general term}, which provides the general formula of a sequence that is defined by separate formulae for even and odd terms. 
\begin{lemma}\label{lemma: odd even sequence general term}
For any $f,g:\mathbb{N}\to\mathbb{R}$, let $\left\{a_n\right\}_{n\ge 0}$ be a sequence defined by separate formulae for its even and odd terms. That is, $a_{2n}=f\left(n\right)$ and $a_{2n+1}=g\left(n\right)$. Then, the general term $a_n$ can be expressed as a single formula as follows: \[a_n=f\left(\frac{n}{2}\right)\frac{1+\left(-1\right)^n}{2}+g\left(\frac{n-1}{2}\right)\frac{1-\left(-1\right)^n}{2}\]
\end{lemma}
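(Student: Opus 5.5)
The plan is a direct case split on the parity of $n$, using only that $\frac{1+(-1)^n}{2}$ and $\frac{1-(-1)^n}{2}$ are the indicator functions of ``$n$ even'' and ``$n$ odd''. First, I would note that for every integer $n\ge 0$ we have $(-1)^n=1$ when $n$ is even and $(-1)^n=-1$ when $n$ is odd. Hence $\frac{1+(-1)^n}{2}\in\{0,1\}$ equals $1$ exactly when $n$ is even, and $\frac{1-(-1)^n}{2}$ equals $1$ exactly when $n$ is odd. These two weights are complementary and sum to $1$.

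Next, I would treat the even case $n=2m$ with $m\in\mathbb{N}$. Then $\frac{1+(-1)^n}{2}=1$ and $\frac{1-(-1)^n}{2}=0$, so the right-hand side reduces to $f(n/2)=f(m)=a_{2m}=a_n$. The second term has coefficient $0$. Here there is one point to address: $g\left(\frac{n-1}{2}\right)$ has a non-integer argument and so lies outside the domain $\mathbb{N}$. I would handle this by adopting the convention that a term multiplied by a zero coefficient is $0$. Equivalently, one may read the formula as a piecewise definition in which only the term with nonzero coefficient is evaluated, or extend $f$ and $g$ arbitrarily to $\tfrac12\mathbb{Z}_{\ge0}$; the choice of extension is irrelevant because it is multiplied by $0$.

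The odd case $n=2m+1$ is symmetric. The coefficients become $0$ and $1$, and the right-hand side reduces to $g\left(\frac{n-1}{2}\right)=g(m)=a_{2m+1}=a_n$. Since every $n\ge 0$ falls into exactly one of these two cases, this establishes the identity.

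The only genuine obstacle is the domain issue just described: $f(n/2)$ and $g\left(\frac{n-1}{2}\right)$ are not literally defined for every $n$. I would resolve it with a brief remark stating the convention before the case analysis. The rest is routine verification.
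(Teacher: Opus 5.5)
Your proof is correct and is essentially the paper's own argument: the paper also just observes that $\frac{1\pm(-1)^n}{2}$ are the indicator functions of the two parities and checks each case. Your remark about the undefined terms ($f(n/2)$ for odd $n$, $g\left(\frac{n-1}{2}\right)$ for even $n$) is a useful addition the paper omits. One small correction: for $n=0$ the argument of $g$ is $-\tfrac12$, so an extension to $\tfrac12\mathbb{Z}_{\ge 0}$ does not quite suffice. Your convention that a term with zero coefficient is $0$ already covers this case.
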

One can prove Lemma \ref{lemma: odd even sequence general term} by construction. The idea is to express $a_n$ as a linear combination using characteristic functions for the parity of $n$. That is,
\[
a_n = \mathbf{1}_{\text{even}}\left(n\right) \cdot \text{term for even } n + \mathbf{1}_{\text{odd}}\left(n\right) \cdot \text{term for odd } n,
\]
and utilise the property that $\left(-1\right)^n$ oscillates between $1$ and $-1$.

We now give a proof of Theorem \ref{theorem one bridge one torch with crossing times 1 to n}.
\begin{proof} Let the people be $\left\{1,\ldots,n\right\}$ with crossing times $1<2<\cdots<n$. A state is a pair $\left(L,\lambda\right)$ where $L\subseteq\left\{1,\ldots,n\right\}$ is the set currently on the left bank, and $\lambda\in\left\{\textsf{L},\textsf{R}\right\}$ is the torch position. The initial and terminal states are $s=\left(\left\{1,\ldots,n\right\},\textsf{L}\right)$ and $t=\left(\emptyset,\textsf{R}\right)$ respectively. A move from $\left(L,\textsf{L}\right)$ to $\left(L\setminus A,\textsf{R}\right)$ consists of choosing a set $A\subseteq L$ with $\left|A\right|\in\left\{1,2\right\}$ and sending those people across, so its cost is $\max A$. A move from $\left(L,\textsf{R}\right)$ to $\left(L\cup A,\textsf{L}\right)$ consists of choosing $A\subseteq L'$ with $\left|A\right|\in \left\{1,2\right\}$ and sending those people back, so again its cost is $\max A$. Here, $L'$ denotes the complement of $L$.

For any state $u$, let $\operatorname{OPT}\left(u\right)$ denote the minimum total cost of any valid sequence of moves from $u$ to $t$. For $k\ge 2$, define the \emph{canonical} state $S_k=\left(\left\{1,\ldots,k\right\},\textsf{L}\right)$, so $T\left(k\right)=\operatorname{OPT}\left(S_k\right)$ which agrees with the original definition of $T\left(n\right)$. 

We claim that \begin{align}\label{eqn: T(k) = T(k-2) + k + 5}
T\left(k\right)=T\left(k-2\right)+k+5\quad\text{for all }k\ge 4.
\end{align}
To obtain this recurrence relation, we first consider any optimal plan starting from the canonical state $S_k$. In any plan that begins at $S_k$ and ends at $t$, persons $k-1$ and $k$ must eventually end up on the right bank. Since they start on the left, there is a first occurrence at which both $k-1$ and $k$ are on the right simultaneously. Let this moment occur immediately after some $\textsf{L}\to\textsf{R}$ move. During this move, at least one of $k-1$ and $k$ crosses from left to right. The cost of this move is $\ge k$ as person $k$ must cross at the same time. So, the move that first places both $k-1$ and $k$ on the right has cost $\ge k$.

We now show that in an optimal plan, this move is precisely the crossing $\left\{k-1,k\right\}$ together (so the cost is $k$), and that immediately after it, $\lambda\to\textsf{L}$ by person 1. Indeed, once both $k-1$ and $k$ are on the right, if the the process has not terminated yet, the torch must return to the left at least once more as there are still some people remaining to be transported. Any return move has cost $\ge 1$, and if person 1 is on the right at that time, then using 1 as the returner is never more expensive than using anyone else. Thus, in an optimal plan, we may choose person 1 as the returner at the first return after $k-1$ and $k$ are together on the right. This return costs exactly 1.

So, to obtain the pattern where we get $1\to\textsf{R}$ while keeping $\lambda=\textsf{L}$, then send $\left\{k-1,k\right\}\to\textsf{R}$, and then $1\to\textsf{L}$, we must ensure that person 1 is on the right and $\lambda= \textsf{L}$ immediately before $k-1$ and $k$ cross so that 1 can return after that crossing. 

Starting from $S_k$, suppose at some time, we reach a state with $\lambda=\textsf{L}$ and with person 1 on the right. Since person 1 starts on the left, there must have been at least one $\textsf{L}\to\textsf{R}$ crossing that included 1. Thereafter, since $\lambda=\textsf{L}$ at the moment under consideration, there must have been at least one $\textsf{R}\to\textsf{L}$ return made by someone other than 1, otherwise 1 would have returned to the left. The cheapest way to do this is as follows:
\begin{itemize}
\item The first $\textsf{L}\to\textsf{R}$ move that puts $1$ on the right has cost $\ge 2$ because if $1$ crosses alone, then the only person on the right is $1$, and the only possible returner is $1$, contradicting the requirement that $1$ stays on the right while $\lambda\to \textsf{L}$. Hence at least two people must cross with $1$, and the smallest possible cost is achieved by sending 1 and 2.
\item After 1 and 2 cross, to have the torch back on the left while keeping $1$ on the right, someone other than $1$ must return; the cheapest such returner is $2$, costing $2$.
\end{itemize}
Therefore any plan must incur cost $\ge 2+2=4$ before it can be in a state with $\lambda=\textsf{L}$ and person $1$ on the right. Moreover, this cost $4$ is achieved by the two moves $\left\{1,2\right\}\to \textsf{R}$ with cost 2 and $2\to\textsf{L}$ with cost 2, leaving $1$ on the right and $\lambda=\textsf{L}$.

From the preceding steps, any optimal plan starting at $S_k$ necessarily contains, in some order consistent with feasibility, the following components:
\begin{itemize}
    \item cost 4 to place 1 on the right with $\lambda=\textsf{L}$
    \item an $\textsf{L}\to\textsf{R}$ crossing that first places both $k-1$ and $k$ on the right with cost $\ge k$
    \item at least one return after that costing at least 1
\end{itemize}
Thus, the total cost incurred by the time we have transported $k-1$ and $k$ to the right and brought the torch back to the left is at least $k+5$. On the other hand, the set of four-move sequences $\left\{1,2\right\}\to\textsf{R}$, $2\to\textsf{L}$, $\left\{k-1,k\right\}\to \textsf{R}$, and $1\to \textsf{L}$ is valid and has total cost $k+5$. After these moves, $k-1$ and $k$ would have been transferred to the right, and the system is in the canonical state $S_{k-2}=\left(\left\{1,\ldots,k-2\right\},\textsf{L}\right)$.

From this point onward, only the people $\left\{1,\ldots,k-2\right\}$ remain to be transported in exactly the same problem form. By optimal substructure, the minimum remaining time is $T\left(k-2\right)$. Therefore, we obtain the recurrence relation (\ref{eqn: T(k) = T(k-2) + k + 5}) as claimed.

For the base cases, one can directly check that $T\left(2\right)=2$ and $T\left(3\right)=6$. We then solve the recurrence relation. If $n=2m$ with $m\ge 1$, then \[T\left(2m\right)=T\left(2\right)+\sum_{j=2}^{m}\left(2j+5\right)=m^2+6m-5=\frac{n^2}{4}+3n-5.\]
On the other hand, if $n=2m+1$ with $m\ge 1$, then \[T\left(2m+1\right)=T\left(3\right)+\sum_{j=2}^{m}\left(\left(2j+1\right)+5\right)=\frac{n^2}{4}+3n-5-\frac{1}{4}.\]
By Lemma \ref{lemma: odd even sequence general term}, we combine the parity cases to obtain the desired exact formula for $T\left(n\right)$, which holds for all $n\ge 2$.
\end{proof}
One notes that the sequence of steps for $\left\{1,2,\ldots,n\right\}$ is not unique. For example, in the capacity $c=2$ case, although $T\left(4\right)=11$, the code has the following output (Figure \ref{fig:c2 n4}).

\begin{figure}[H]
    \centering
    \begin{tabular}{|c|c|c|c|c|}
        \hline
        Step & Direction & People & Time & Cumulative \\
        \hline
        1 & \textsf{L} to \textsf{R} & 1, 2 & 2 & 2 \\
        \hline
        2 & \textsf{R} to \textsf{L} & 1 & 1 & 3 \\
        \hline
        3 & \textsf{L} to \textsf{R} & 1, 3 & 3 & 6 \\
        \hline
        4 & \textsf{R} to \textsf{L} & 1 & 1 & 7 \\
        \hline
        5 & \textsf{L} to \textsf{R} & 1, 4 & 4 & 11 \\
        \hline
    \end{tabular}
    \caption{The steps taken for $T(4) = 11$. Time taken was approx 102 $\mu$s.}
    \label{fig:c2 n4}
\end{figure}
As mentioned in Theorem \ref{theorem one bridge one torch with crossing times 1 to n} and sequence \seqnum{A078476}, indeed $T(4)=11$ but we cannot apply this naive strategy of sending 1 to and fro in order to obtain the optimal solution. For example, if we employ this strategy for $T(5)$, we would see that we obtain 17 instead of the supposed optimal time of 16. Let $a(n)$ denote the minimum number of times person 1 travels either across or back. Then, one can easily deduce that $a\left(4\right)=3$ instead of 5. This shows that the sequence of steps in helping $\left\{1,\ldots,n\right\}$ get across the bridge is not unique. Moreover, we have an identity showing an example of the non-uniqueness of the sequence of steps for the case when $n=4$, which is \[2+1+3+1+4=2+1+4+2+2,\]
where the order in which we sum the terms matters. Proposition \ref{proposition:a(n) nice odd} shows a nice connection between $a(n)$ and the sequence of odd numbers repeated.
\begin{proposition}\label{proposition:a(n) nice odd}
Let $a\left(n\right)$ denote the minimum number of times person 1 travels either across or back for the capacity $c=2$ case. Then, $a\left(n\right)$ is equal to
sequence \seqnum{A109613}, which denotes the sequence of odd numbers repeated. The first few terms are \[1, 1, 3, 3, 5, 5, 7, 7, 9, 9,\ldots.\]
\end{proposition}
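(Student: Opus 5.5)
The plan is to interpret $a(n)$ as the minimum, over all \emph{optimal} schedules (those achieving $T(n)$), of the number of crossings in which person $1$ takes part. The claim is then $a(n)=2\lceil n/2\rceil-1$, i.e.\ $a(n)=n-1$ for even $n$ and $a(n)=n$ for odd $n$. I would prove matching upper and lower bounds. The case $n=1$ is trivial: one crossing.

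\textbf{Upper bound.} I would read it off the optimal schedule built in the proof of Theorem~\ref{theorem one bridge one torch with crossing times 1 to n}.

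\begin{itemize}
\item Each reduction $S_k\to S_{k-2}$ uses the moves $\{1,2\}\to\textsf{R}$, $2\to\textsf{L}$, $\{k-1,k\}\to\textsf{R}$, $1\to\textsf{L}$. Person $1$ travels exactly twice in this block.
\item For the base case $S_2$, the single move $\{1,2\}\to\textsf{R}$ uses person $1$ once.
\item For the base case $S_3$, the moves $\{1,3\},1,\{1,2\}$ cost $6$ and use person $1$ three times.
\end{itemize}

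Summing over the blocks gives $1+2(m-1)=2m-1$ trips for $n=2m$, and $3+2(m-1)=2m+1$ trips for $n=2m+1$. Since every move in these schedules is one of the optimal moves used in the theorem's proof, the total cost is $T(n)$, so these schedules are optimal.

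\textbf{Lower bound.} I would first establish some structural facts about an arbitrary optimal schedule, either by the exchange arguments in the proof of Theorem~\ref{theorem one bridge one torch with crossing times 1 to n} or by citing Rote's classification \cite{Rote2002}:

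\begin{itemize}
\item every forward move carries exactly two people and every return carries one;
\item hence there are $n-1$ forward moves and $n-2$ returns;
\item every return is made by person $1$ or person $2$;
\item no person in $\{3,\ldots,n\}$ ever returns.
\end{itemize}

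Let $r_1$ and $r_2$ be the numbers of returns made by persons $1$ and $2$, so $r_1+r_2=n-2$. Person $1$ starts on the left and ends on the right, so he crosses forward $r_1+1$ times. His total number of trips is therefore $2r_1+1$, and it remains to bound $r_2$ from above. The key step is to show $r_2\le\lfloor (n-2)/2\rfloor$, and I would argue it as follows.

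\begin{itemize}
\item Person $2$ crosses forward $r_2+1$ times.
\item Consider two consecutive returns by person $2$. Between them, person $2$ must cross forward, and also some forward move must bring the torch over without person $2$ returning it.
\item I would argue that in an optimal schedule each return by $2$ is ``paid for'' by a forward move consisting of two people from $\{3,\ldots,n\}$. Otherwise the pattern $\{1,2\}\to\textsf{R}$, $2\to\textsf{L}$ could be replaced by $1$ escorting a single large person at no greater cost, and this replacement does not increase the number of trips by $1$ in the relevant direction.
\item Each large person crosses forward exactly once, so there are at most $\lfloor (n-2)/2\rfloor$ such forward moves made up only of large people.
\end{itemize}

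Given this bound, the trip count is $2r_1+1=2(n-2-r_2)+1\ge 2\lceil (n-2)/2\rceil+1$. This equals $n-1$ for even $n$ and $n$ for odd $n$, matching the upper bound.

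The main obstacle is the structural lemma, and specifically the injection from returns by $2$ to forward moves carrying only large people. I would first try a counting argument on forward moves. Forward moves that contain person $1$ or person $2$ carry at most one large person each, and the remaining forward moves carry exactly two large people. Counting the $n-2$ large-person crossings against the $n-1$ forward moves, with person $1$ in $r_1+1$ of them and person $2$ in $r_2+1$ of them, should force the inequality. If double-counting the move $\{1,2\}$ causes trouble, I would fall back on citing Rote's explicit description of optimal schedules.
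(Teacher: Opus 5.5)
Your upper bound matches the paper's argument. The paper's proof just counts person $1$'s trips in the block schedule (with the two returns of each block in the opposite order) and does not argue minimality. The gap is in your lower bound, at the step you flag.

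Let $x,y_1,y_2,z$ be the numbers of forward moves of the form $\{1,2\}$, $\{1,b\}$, $\{2,b\}$ and $\{c,d\}$, with $b,c,d\ge 3$. The slot counting you propose gives only the identities $z=x-1$ and $r_2=z+y_2$, and these are compatible with every split $r_1+r_2=n-2$. For instance, the schedule $\{2,a\}\to\textsf{R}$, $2\to\textsf{L}$ for $a=3,\dots,n$, followed by $\{1,2\}\to\textsf{R}$, is feasible, satisfies all of your structural facts, and has $r_2=n-2$. So no counting argument can yield $r_2\le\lfloor (n-2)/2\rfloor$. Your injection ($r_2\le z$) is equivalent to $y_2=0$, which is a statement about cost.

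The exchange you sketch also aims at the wrong configuration. The offender is a forward move $\{2,b\}$, not the pattern $\{1,2\}\to\textsf{R}$, $2\to\textsf{L}$; that pattern is exactly what a large-pair move pays for. In addition, an exchange at ``no greater cost'' cannot show that \emph{every} optimal schedule has a property, and that is what a lower bound on a minimum needs.

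The missing ingredient is a cost identity. Under your structural facts the total cost is $2x+S+r_1+2r_2$, where $S$ is the total cost of the forward moves carrying large people. Using $r_1=x+y_1-1$, $r_2=x+y_2-1$ and $y_1+y_2=n-2x$, this equals $3x+n-3+S+y_2$. Now build a competing schedule in three stages:
\begin{enumerate}
\item $z$ blocks $\{1,2\}\to\textsf{R}$, $2\to\textsf{L}$, $\{c,d\}\to\textsf{R}$, $1\to\textsf{L}$, one for each of the same large pairs;
\item person $1$ escorting the remaining large people one at a time;
\item a final move $\{1,2\}\to\textsf{R}$.
\end{enumerate}
This schedule has the same $x$ and $S$ but $y_2=0$, so it is cheaper by exactly $y_2$. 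Hence every optimal schedule has $y_2=0$. Then $r_2=z\le\lfloor (n-2)/2\rfloor$, and your final computation goes through.

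Separately, the structural facts themselves must hold for every optimal schedule, not just for some. The exchange steps in the proof of Theorem~\ref{theorem one bridge one torch with crossing times 1 to n} are normalizations (``we may choose person 1 as the returner''). A normalization that hands a return to person $1$ changes the very quantity you are minimizing. You therefore need strict-improvement versions of those exchanges, or a classification of all optimal schedules.
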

\begin{proof}
By considering the formula for $T\left(n\right)$ as in Theorem \ref{theorem one bridge one torch with crossing times 1 to n}, one can deduce that the optimal time taken involves considering \emph{blocks} of the form $\left\{1,2\right\}\to \textsf{R}$, $1\to \textsf{L}$, $\left\{k-1,k\right\}\to \textsf{R}$, $2\to \textsf{L}$. If $n\ge 4$ is even, then there are $\frac{n}{2}-1$ such blocks, followed by a single move where $\left\{1,2\right\}\to \textsf{R}$; if $n\ge 5$ is odd, then there are $\frac{n-3}{2}$ such blocks, followed by the moves $\left\{1,2\right\}\to \textsf{R}$, $1\to \textsf{L}$, $\left\{1,3\right\}\to\textsf{R}$. In each case, one can count the number of times person 1 travels either across or back, which indeed yields
the desired sequence.
\end{proof}
We can visualise the set of moves taken to obtain $T\left(n\right)$. We give two examples for $T\left(5\right)$ and $T\left(8\right)$ as shown in Figures \ref{fig:visualising set of crossings for T(5)} and \ref{fig:visualising set of crossings for T(8)} respectively. Here, each edge denotes a move made by the two vertices, where vertex $i$ represents person $i$ with crossing time $i$. Unlike Rote's visualisation in \cite{Rote2002}, ours incorporates \emph{loops} for vertices 1 and 2 so that Proposition \ref{proposition:a(n) nice odd} becomes more apparent --- $a\left(n\right)$ counts the degree of vertex 1. Moreover, we have colored vertices 1 and 2 as they have more interaction than the others.
\begin{figure}[H]
    \centering
    \begin{tikzpicture}[scale=1]

    \foreach \x/\lab in {0/1,2/2,4/3,6/4,8/5}{
      \fill (\x,0) circle (2.5pt);
      \node[above] at (\x,0.1) {\lab};
    }
    
    \draw[bend left=20] (0,0) to (2,0);
    \draw[bend left=35] (0,0) to (4,0);
    
    \draw (6,0) to (8,0);
    \draw (0,0) to (2,0);

    \node[circle,draw,inner sep=1.5pt, draw=red, fill=red] (v1) at (0,0) {};
    \draw (v1) edge[loop below] node[below]{2 loops} ();

    \node[circle,draw,inner sep=1.5pt, draw=red, fill=red] (v2) at (2,0) {};
    \draw (v2) edge[loop below] node[below]{1 loop} ();
    
    \end{tikzpicture}
    \caption{Visualising the set of moves to obtain $T\left(5\right)$}
    \label{fig:visualising set of crossings for T(5)}
\end{figure}

\begin{figure}[H]
    \centering
    \begin{tikzpicture}[scale=1]

    \foreach \x/\lab in {0/1,2/2,4/3,6/4,8/5,10/6,12/7,14/8}{
      \fill (\x,0) circle (2.5pt);
      \node[above] at (\x,0.1) {\lab};
    }
    
    \draw[bend left=20] (0,0) to (2,0);
    \draw[bend left=35] (0,0) to (2,0);
    \draw[bend left=55] (0,0) to (2,0);
    \draw (0,0) to (2,0);

    \draw (4,0) to (6,0);
    \draw (8,0) to (10,0);
    \draw (12,0) to (14,0);

    \node[circle,draw,inner sep=1.5pt, draw=red, fill=red] (v1) at (0,0) {};
    \draw (v1) edge[loop below] node[below]{3 loops} ();

    \node[circle,draw,inner sep=1.5pt, draw=red, fill=red] (v2) at (2,0) {};
    \draw (v2) edge[loop below] node[below]{3 loops} ();
    
    \end{tikzpicture}
    \caption{Visualizing the set of moves to obtain $T\left(8\right)$}
    \label{fig:visualising set of crossings for T(8)}
\end{figure}

\section{The capacity-three case}
Let $c$ denote the capacity of the bridge, meaning that at most $c$ persons may be on the bridge in a single move. The classical bridge and torch puzzle corresponds to the case $c=2$, which was analysed in Theorem \ref{theorem one bridge one torch with crossing times 1 to n}. In this section, we study the bridge and torch problem for capacity $c=3$, while retaining the single-torch constraint. Our objective is to describe the structural features of optimal schedules. Although Backhouse and Truong \cite{BackhouseTruong2015} analysed the general capacity-$c$ torch problem using dynamic programming, our closed formula considers the case where crossing times are $\left\{1,\ldots,n\right\}$ for $c=2,3$.

\begin{theorem}\label{theorem capacity c optimal}
Let $T_3\left(n\right)$ denote the minimum total time to transport persons $\left\{1,2,\dots,n\right\}$ with crossing times $1<2<\cdots<n$ across a bridge of capacity 3 with a single torch, where each move carries the torch and the time of a move equals the maximum crossing time among the movers. For all $n\ge 7$, we have
\begin{align*}
    T_3\left(n\right)=\frac{n^{2}}{6}+2n-\frac{181}{36}+\frac{\left(-1\right)^{n}}{4}-\frac{2}{9}\cos\left(\frac{2n\pi}{3}\right)
\end{align*}
with initial conditions $T_3\left(1\right)=1$, $T_3\left(2\right)=2$, $T_3\left(3\right)=3$, $T_3\left(4\right)=7$, $T_3\left(5\right)=9$, $T_3\left(6\right)=14$.
\end{theorem}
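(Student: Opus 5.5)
The plan is to imitate the canonical-state argument from the proof of Theorem \ref{theorem one bridge one torch with crossing times 1 to n}. We first establish a two-option recurrence and then solve it. As there, let $S_k=\left(\left\{1,\ldots,k\right\},\textsf{L}\right)$ be the canonical state, so that $T_3(k)=\operatorname{OPT}(S_k)$. Two "blocks" return the system to a smaller canonical state.
\begin{itemize}
\item[(A)] The first block is $\left\{1,k-1,k\right\}\to\textsf{R}$, then $1\to\textsf{L}$. It costs $k+1$ and reaches $S_{k-2}$.
\item[(B)] The second block is $\left\{1,2\right\}\to\textsf{R}$, $1\to\textsf{L}$, $\left\{k-2,k-1,k\right\}\to\textsf{R}$, $2\to\textsf{L}$. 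It costs $k+5$ and reaches $S_{k-3}$.
\end{itemize}
The central claim to prove is
\begin{align*}
T_3(k)=\min\left\{T_3(k-2)+k+1,\;T_3(k-3)+k+5\right\}\quad\text{for all }k\ge 4 .
\end{align*}
The base values $T_3(1)=1$, $T_3(2)=2$, $T_3(3)=3$ follow from a single crossing, and a check gives $T_3(4)=7$, $T_3(5)=9$, $T_3(6)=14$, matching the stated initial conditions.

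For the upper bound we just exhibit blocks (A) and (B). For the lower bound we argue as in Theorem \ref{theorem one bridge one torch with crossing times 1 to n}. Consider the first forward move that carries person $k$ across. It costs $k$, and by an exchange argument we may assume it carries $k$ together with the two largest people still on the left. This is because swapping a smaller companion for a larger one never increases any move cost: the smaller person can take the larger one's later trips at no greater cost. If the torch must come back afterwards, the cheapest return uses person $1$ or $2$, whoever is on the right.

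There are then two cases, according to whether a slow person or a "shuttle" ($1$ or $2$) fills the third seat.
\begin{itemize}
\item If person $1$ rides with $k$, the extra cost is at least $1$ for the return. We then charge only $k-1$ as also delivered, giving $k+1$ and the reduction to $S_{k-2}$.
\item If $k-2$, $k-1$, $k$ cross together, then some shuttle must already be on the right with the torch on the left. As in the proof of Theorem \ref{theorem one bridge one torch with crossing times 1 to n}, setting this up costs at least $2+1$, and returning afterwards costs at least $2$. This gives $k+5$ and $S_{k-3}$.
\end{itemize}
The main obstacle is making this exchange/normalisation rigorous. In particular we must show that optimal plans can be rearranged into concatenations of blocks (A) and (B), so that returners and interleavings don't break the reduction to a canonical state. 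I would try first to adapt the "first moment both slowest are on the right" argument from Theorem \ref{theorem one bridge one torch with crossing times 1 to n}, now tracking the three slowest people. If that fails, I would fall back on a Rote-style multigraph counting bound or on the dynamic programming framework of Backhouse and Truong \cite{BackhouseTruong2015}.

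Finally we solve the recurrence. From the initial values one checks that for $k\ge 7$ the minimum is attained as follows: by (B) when $k$ is even, giving increment $k+5$ over $T_3(k-3)$, and by a combination giving $T_3(k)=T_3(k-3)+k+4$ when $k$ is odd. The odd case can be verified as $T_3(k-2)+k+1$ with $T_3(k-2)=T_3(k-5)+(k-2)+5$ and $T_3(k-3)=T_3(k-5)+(k-3)+1$. Hence for $n\ge 7$,
\[
T_3(n)-T_3(n-3)=n+\tfrac92+\tfrac{(-1)^n}{2}.
\]
The proposed closed form satisfies exactly this: the quadratic part contributes $n-\tfrac32$, the constant $2n$ term contributes $6$, the parity term contributes $\tfrac{(-1)^n}{2}$, and the $\cos\left(\tfrac{2n\pi}{3}\right)$ term cancels. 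We therefore verify the formula on three consecutive values (computed from the DP), say $n=7,8,9$, and conclude by induction in steps of $3$, combining residue classes via the idea of Lemma \ref{lemma: odd even sequence general term} extended to period $6$.
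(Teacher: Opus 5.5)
Your central claim $T_3(k)=\min\{T_3(k-2)+k+1,\;T_3(k-3)+k+5\}$ is false, so the problem is not only one of making the exchange argument rigorous.

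Run it from your base values. It gives $T_3(7)=17$ and $T_3(8)=22$, but then $T_3(9)=\min\{27,28\}=27$. However, the schedule $\{1,2,3\}\to\textsf{R}$, $1\to\textsf{L}$, $\{7,8,9\}\to\textsf{R}$, $2\to\textsf{L}$, $\{1,5,6\}\to\textsf{R}$, $1\to\textsf{L}$, $\{1,2,4\}\to\textsf{R}$ costs $3+1+9+2+6+1+4=26$, which is the value of the target formula at $n=9$. The error then propagates: the recurrence gives $38$ and $44$ at $n=11,12$ instead of $37$ and $43$.

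The missing idea is that with capacity $3$ the set-up trip for the slow triple can carry an extra fast person at marginal cost $1$. The set-up $\{1,2,3\}\to\textsf{R}$, $1\to\textsf{L}$ costs $4$ rather than $3$, but it leaves person $3$ on the right. After $\{k-2,k-1,k\}\to\textsf{R}$, $2\to\textsf{L}$ one is in the non-canonical state $(\{1,2,4,\ldots,k-3\},\textsf{L})$. Its optimal remaining cost is strictly smaller than $T_3(k-3)$: for $k=9$ it is $11$ versus $14$.

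Your case-two bound ``set-up $\ge 2+1$, return $\ge 2$'' is valid as far as it goes. The step ``and then we are at $S_{k-3}$'' silently assumes the remainder costs at least $T_3(k-3)$, and that is exactly what fails. Optimal plans do not decompose into your blocks (A) and (B). The obstacle you flagged cannot be overcome, because the statement it would establish is wrong.

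The paper avoids this with a six-move block: $\{1,2,3\}\to\textsf{R}$, $1\to\textsf{L}$, $\{k-2,k-1,k\}\to\textsf{R}$, $2\to\textsf{L}$, $\{k-5,k-4,k-3\}\to\textsf{R}$, $3\to\textsf{L}$, of cost $2k+6$. Here person $3$, left on the right by the set-up, serves as the third returner, so the plan does return to a canonical state. This gives $T_3(n)=T_3(n-6)+2n+6$ for $n\ge 13$. The values $T_3(7),\ldots,T_3(12)=17,22,26,32,37,43$ come from exhaustive search, and the $6$-periodic error term is then fitted.

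Your final algebra is correct: the closed form satisfies $T_3(n)-T_3(n-3)=n+\tfrac{9}{2}+\tfrac{(-1)^n}{2}$. That relation is consistent with the paper's values and six-step recurrence for $n\ge 10$, and with true base values at $n=7,8,9$ it would suffice. But you derived it only from the false two-block recurrence. As stated for $n\ge 7$ it is also wrong, since $T_3(7)-T_3(4)=10\neq 11$ and $T_3(9)-T_3(6)=12\neq 13$. To rescue the approach you would need two things: a lower-bound argument that works from non-canonical states (or a period-$6$ block argument as in the paper), and the true values at $n=7,8,9$.
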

We will prove Theorem \ref{theorem capacity c optimal}
by ``squeezing'' $T_3(n)$ between two bounds to form a recurrence relation,
then solving that recurrence relation.

   

\begin{proposition}\label{proposition initial val t3}
    Keep the notation in Theorem \ref{theorem capacity c optimal}.
    Then
    $T_3\left(1\right)=1$, $T_3\left(2\right)=2$, $T_3\left(3\right)=3$, $T_3\left(4\right)=7$, $T_3\left(5\right)=9$, $T_3\left(6\right)=14$.
\end{proposition}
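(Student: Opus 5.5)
For each $n\le 6$ we give an explicit schedule attaining the stated value, then a matching lower bound. The state model (pairs $(L,\lambda)$ with moves of size $1$ to $3$) is the one from the proof of Theorem \ref{theorem one bridge one torch with crossing times 1 to n}, with the capacity changed to $3$.

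\textbf{Upper bounds.} For $n\le 3$ everyone crosses in one move, at cost $n$.
\begin{itemize}
\item[] For $n=4$: $\{1,2,3\}\to\textsf{R}$ (3), $1\to\textsf{L}$ (1), $\{1,4\}\to\textsf{R}$ (3 people allowed, cost 4), total $8$. This is not $7$, so we need something better: $\{1,2\}\to\textsf{R}$ (2), $1\to\textsf{L}$ (1), $\{1,3,4\}\to\textsf{R}$ (4), total $7$.
\item[] For $n=5$: $\{1,2\}\to\textsf{R}$ (2), $1\to\textsf{L}$ (1), $\{1,4,5\}\to\textsf{R}$? That leaves $3$ on the left and needs another round trip. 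Instead use $\{1,2,3\}\to\textsf{R}$ (3), $1\to\textsf{L}$ (1), $\{1,4,5\}\to\textsf{R}$ (5), total $9$.
\item[] For $n=6$: $\{1,2,3\}\to\textsf{R}$ (3), $1\to\textsf{L}$ (1), $\{1,5,6\}\to\textsf{R}$ (6), $1\to\textsf{L}$ (1), $\{1,4\}\to\textsf{R}$ (4), total $15$. The alternative $\{1,2\}\to\textsf{R}$ (2), $1\to\textsf{L}$ (1), $\{4,5,6\}\to\textsf{R}$ (6), $2\to\textsf{L}$ (2), $\{1,2,3\}\to\textsf{R}$ (3) also gives $14$.
\end{itemize}
I would confirm each of these schedules by simply replaying the states.

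\textbf{Lower bounds.} A schedule with $f$ forward moves has $f-1$ returns. Each forward move carries at most $3$ people and each return brings back at least one, so the net progress is at most $3f-(f-1)=2f+1\ge n$. This gives $f\ge \lceil (n-1)/2\rceil$, hence $f\ge 2$ for $n\in\{4,5\}$ and $f\ge 3$ for $n=6$.

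The forward moves have costs at least the maxima of their groups. Person $n$ costs $n$ in some forward move, and every returner costs at least $1$. For $n=4$ we have at least one return, so the cost is at least $1$. The two forward moves must together cover $\{1,2,3,4\}$ with at most $3$ per move, so the second largest forward cost is at least $2$. This yields $4+2+1=7$. For $n=5$ the same argument forces the other forward move to have cost at least $3$, since the five people need at least $5$ forward slots across $f=2$ moves (more if someone re-crosses). Hence one move contains $5$ and the other contains a person $\ge 3$, giving $5+3+1=9$. With $f\ge 3$ the bound is even larger.

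For $n=6$ this is the delicate case and the main obstacle. I would split on $f$. If $f\ge 4$, then crude bounds ($6$, plus at least $1$ for each further forward move, plus $3$ returns) may fail to reach $14$. So I would instead argue via the same exchange idea as in Theorem \ref{theorem one bridge one torch with crossing times 1 to n}. Let $M$ be the forward move containing $6$, and suppose it contains neither $5$ nor $4$. Then $5$ costs at least $5$ elsewhere and $4$ costs at least $4$, giving $6+5+\dots$ beyond $14$ once returns are counted.

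Otherwise, consider the forward move carrying $5$ or $4$ outside $M$. Any return of cost $\ge 2$, or any forward move of cost $\ge 3$ besides these, is accounted for, and we check the finitely many cases where the sum is at most $13$. This leads to a contradiction with the slot count $2f+1\ge 6$ together with the requirement that someone be on the right to return.

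If this case analysis becomes unwieldy, the fallback is that the state space is finite ($2^6\cdot 2$ states). $T_3(6)=14$ can then be certified by a Dijkstra computation, in the same spirit as the computer output quoted for Figure \ref{fig:c2 n4}.
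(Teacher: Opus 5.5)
The paper proves this proposition only ``by complete search'', so your fallback (a shortest-path computation on the $2^6\cdot 2$ states) is exactly the paper's argument. Your explicit schedules of cost $7$, $9$ and $14$ are correct. The hand-derived lower bounds are the genuinely different part, and they would make the proposition computer-free. As written, however, they do not go through.

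For $n=4,5$ the covering count is too weak. Covering alone permits forward moves $\{2,3,4\},\{1\}$ or $\{3,4,5\},\{1,2\}$, which would give only $4+1+1$ and $5+2+1$. What you need is that every returner must later cross forward again, so $f$ forward moves carry at least $n+(f-1)$ people. For $f=2$ this forces the forward move without person $n$ to carry at least $2$ people when $n=4$, and exactly $3$ when $n=5$; this gives $7$ and $9$. For $f\ge 3$ the crude count $n+2(f-1)$ suffices. It gives $8$, and for $n=5$ exactly $9$, not something ``even larger''.

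For $n=6$ there is a real hole. Your ``otherwise'' branch presupposes that $4$ or $5$ lies outside the move $M$ carrying $6$, so the case $M=\{4,5,6\}$ is never treated. That is precisely where the optimum $14$ lives and where no slot-count contradiction is available, and ``we check the finitely many cases'' is not an argument. The case analysis can be closed by hand.

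\begin{itemize}
\item If one of $4,5,6$ ever returns, the cost is already at least $6+4+4$.
\item Otherwise each of them crosses once. If $4$ or $5$ is in a forward move $N\ne M$, the total is at least $2f+7$, which exceeds $14$ for $f\ge 4$. For $f=3$, the value $13$ would force the third forward move and both returns to be $\{1\}$, whence $|M|+|N|=7>6$.
\item If $M=\{4,5,6\}$, then $M$ is not first, since the next return would be by one of $4,5,6$. It is not last either, since the preceding return left someone from $\{1,2,3\}$ on \textsf{L}. So just before $M$, some $S$ with $\emptyset\ne S\subsetneq\{1,2,3\}$ is on \textsf{R} with the torch on \textsf{L}.
\end{itemize}

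In the last case, the moves before $M$ cost at least $3$, because some forward move carries at least two people and the phase ends with a return. Add $1$ if $3\in S$, and another $1$ if $1\in S$, since the last return before $M$ is then not by person $1$. The return after $M$, plus the later crossing of the slowest person then on \textsf{L}, costs at least $5$ minus the same two corrections. Hence all moves other than $M$ cost at least $8$, and $T_3(6)\ge 14$.
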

\begin{proof}
    By complete search.
\end{proof}

Before we can make any further combinatorial
arguments to prove Proposition \ref{proposition recurrence t3}
we will prove a few useful lemmas.
\begin{lemma}\label{lemma fastest person always returns}
    Only the fastest person on $\mathsf{R}$ makes a return trip
    to $\mathsf{L}$.
\end{lemma}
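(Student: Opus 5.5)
Read the lemma as a normalisation statement: \emph{there is an optimal schedule in which every $\mathsf{R}\to\mathsf{L}$ move is made by exactly one person, namely the fastest person on $\mathsf{R}$ at that moment.} I will prove this by an exchange argument. Start from an optimal schedule. Among all optimal schedules, choose one that minimises (first) the total number of people carried on return trips, and then (second) the sum of the indices of the returners. I will show that any violation of the normal form lets me build another optimal schedule that is smaller in this order, which is a contradiction.

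\textbf{Step 1: every return trip carries one person.} Suppose a return move $\mathsf{R}\to\mathsf{L}$ carries a set $A$ with $|A|\ge 2$, and let $m=\max A$. Replace it by the single move $\{m\}\to\mathsf{L}$. Its cost stays $m$, or falls if $m$ was not the maximum, so in fact the cost is unchanged. The people in $A\setminus\{m\}$ now stay on $\mathsf{R}$. In the original schedule, each later move that carried someone $p\in A\setminus\{m\}$ from $\mathsf{L}$ to $\mathsf{R}$ simply drops $p$ (at the first such crossing). Dropping people never increases the cost of a move, and it keeps the capacity constraint. Every later move in which $p$ returns would then be a move from the wrong side, but this cannot happen before that first re-crossing, since $p$ was on $\mathsf{L}$ in the meantime. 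Hence the modified schedule is feasible and no more costly, and it carries strictly fewer people on returns. One subtlety: a forward move could become empty. If so, the move it follows and the move it precedes are both returns, and I merge or delete them, which only lowers the cost.

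\textbf{Step 2: the single returner is the fastest person on $\mathsf{R}$.} Suppose the return at some time is made by $q$, while the faster person $p<q$ is also on $\mathsf{R}$. Swap the roles of $p$ and $q$ from this moment on: every subsequent move of $q$ becomes a move of $p$, and vice versa. The two are then on the same sides as in the original schedule, with labels exchanged, so the result is feasible. Each move containing exactly one of $p,q$ changes cost from involving $q$ to involving $p$, or the reverse. Summing these changes directly is delicate, so I would instead perform a local exchange. Let $p$ make this return instead of $q$. Then, at the next forward move that contains $p$ in the original schedule (or the final stage), $q$ takes $p$'s place on its next needed crossing. I expect to argue as follows: $q$ must still be ferried across at some later forward move $F$ that costs at least $q$. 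Adding $p$ to $F$ in place of a slot is impossible if $F$ is full, so instead I exchange $p$ for $q$ in the set of people on $\mathsf{L}$ from the return until $F$. Moves in between that carried $q$ now carry $p$ and cost no more, $F$ keeps cost $\ge q$, and the return itself is cheaper by $q-p>0$. This strictly decreases the cost, contradicting optimality, or at least decreases the secondary measure.

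The main obstacle is making Step 2 rigorous. The label swap can increase the cost of later moves in which $p$ originally travelled with slower companions and now $q$ takes $p$'s place; such a move is unaffected only if its maximum already exceeds $q$. I would handle this by swapping only up to the first forward move after the return that contains $p$ or $q$, and checking the cases there. If that case analysis fails, I would fall back on weakening the lemma to ``some optimal schedule'' and choosing the minimiser of the lexicographic potential, which is all that the recurrence argument in Proposition~\ref{proposition recurrence t3} needs.
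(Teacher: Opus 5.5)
Your reading of the lemma as a normalisation statement (``some optimal schedule'') is the right one. The paper gives no argument of its own here; it only cites Sniedovich. A self-contained exchange proof is therefore worth having, and your Step~1 is essentially fine.

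The gap is in Step~2, and it is not just presentation. The claimed strict decrease fails when the faster person $p$, who sits on $\mathsf{R}$, makes a return trip \emph{before} $q$ crosses again. After your exchange $p$ is already on $\mathsf{L}$ at that point, so that later return must be made by $q$. This costs exactly $q-p$ more and cancels the saving.

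Your tie-break does not catch this case either. The two returners are merely permuted, so both the number of people carried on returns and the sum of returner indices are unchanged. Concretely, take $c=2$ and times $1,2,5,10$. The schedule $\{1,2\}\to\mathsf{R}$, $2\to\mathsf{L}$, $\{5,10\}\to\mathsf{R}$, $1\to\mathsf{L}$, $\{1,2\}\to\mathsf{R}$ costs $17$, so it is optimal. It violates the conclusion at its first return, yet it ties with the standard schedule (returners $1$ then $2$) in both of your measures. So your minimiser need not satisfy the lemma. The same example also shows that the literal ``every optimal schedule'' reading is false.

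Your suggested repair, swapping only up to the first \emph{forward} move involving $p$ or $q$, does not help. The obstruction is $p$'s return, which comes before any forward move involving $p$.

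To close the gap, take $p$ to be the fastest person on $\mathsf{R}$ at the offending return $r$. Let $M$ be the first later move involving $p$ or $q$; it exists because $q$ must cross again. Until $M$ we have $q\in\mathsf{L}$ and $p\in\mathsf{R}$. Hence $M$ is either a forward move containing $q$ but not $p$, or a return of $p$ alone (by Step~1).

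Interchange $p$ and $q$ in $r$ and in $M$ only. The moves in between involve neither person, and after $M$ the configuration coincides with the original, so the new schedule is feasible. In the first case the cost drops by at least $q-p>0$, contradicting optimality. In the second case the cost is unchanged, but the sequence of returners becomes lexicographically smaller: it agrees with the original before $r$ and has $p<q$ at $r$.

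So choose, among the optimal schedules with single-person returns, one whose returner sequence is lexicographically least, instead of minimising the sum of returner indices. There are finitely many such schedules, since each move costs at least $1$ and so their length is bounded. Equivalently, you can repair the violations in chronological order.
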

\begin{proof}
    See Sniedovich \cite{Sniedovich2002}.
\end{proof}

\begin{lemma}\label{lemma fastest person ct}
    In any optimal schedule,
    the fastest person that makes a required return trip has crossing time
    no more than $4$ units.
\end{lemma}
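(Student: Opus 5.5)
We argue by contradiction through an exchange argument. Fix an optimal schedule for the persons $\{1,\ldots,n\}$ with capacity $3$. By Lemma \ref{lemma fastest person always returns}, every return trip $\mathsf{R}\to\mathsf{L}$ is made by a single person, namely the fastest person currently on $\mathsf{R}$. Suppose some return is made by a person $r\ge 5$. Consider the most recent forward move $\mathsf{L}\to\mathsf{R}$ before this return. Since $r$ is the fastest person on $\mathsf{R}$, none of $1,\ldots,r-1$ is on $\mathsf{R}$ at that moment. In particular, $1,2,3,4$ are all on $\mathsf{L}$ whenever person $r$ is sent back. We will show that the schedule can be modified to strictly lower its cost, which contradicts optimality.

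The modification works as follows. Look at the segment of the schedule starting at the forward move that last brought $r$ to $\mathsf{R}$ and ending at the return of $r$. Throughout this segment every returner is the fastest person on $\mathsf{R}$, and $1,\ldots,4$ are all on $\mathsf{L}$ at its end. We aim to replace the pair ``forward move carrying $r$, later return of $r$'' by a version in which $r$ never crosses, and to pay for the subsequent return with a fast person instead.

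\textbf{Case 1: $r$ crossed together with a companion $j<r$.} Keeping $j$ on $\mathsf{R}$ leads to a contradiction. If $j$ stays on $\mathsf{R}$ until $r$ returns, then $j$ is faster than $r$ and is on $\mathsf{R}$, which violates Lemma \ref{lemma fastest person always returns}. So every companion of $r$ has already gone back by the time $r$ returns. We therefore drop $r$ from the forward move. The move's cost weakly decreases, and we also delete $r$'s return, which saves at least $r\ge 5$. The companions' own trips are unchanged, and feasibility is preserved because $r$'s presence on $\mathsf{R}$ was never used to carry anyone: each return is a single person and is not $r$ until the final one.

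\textbf{Case 2: $r$ crossed alone.} The forward move and the return both carry only $r$, so together they move no one. Deleting both (and merging the adjacent moves if necessary, which the capacity allows since the torch sides match) strictly reduces the cost.

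Both cases yield a cheaper valid schedule, so no optimal schedule uses a returner $r\ge 5$. The threshold $4$ comes from the fact that with capacity $3$ the natural ferrying strategies use at most the four fastest people as shuttles (for instance $\{1,2,3\}$ or $\{1,2,4\}$ patterns). The bound is stated as ``no more than $4$'' rather than tighter because small cases such as $T_3(4)=7$ may use person $4$.

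The main obstacle is making the deletion in Case 1 rigorous. We must check that removing $r$ and its return does not break a later move in which $r$ would otherwise have been needed, and that the torch sequence remains alternating. The plan is to handle this by noting that $r$ eventually needs one final forward crossing anyway. We then attach $r$ to its \emph{last} forward move, at a cost of at most $\max(\text{move},r)$. If that move has cost below $r$, this adds at most $r-1$, which is still less than the $r$ we saved. Adding $r$ must also respect capacity $3$. If the last move is full, we would instead reattach $r$ to the original move's slot and argue via an earlier full-move count. This capacity bookkeeping is the delicate step, and we would attempt it first by induction on the number of returns by persons $\ge 5$.
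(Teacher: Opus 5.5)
There is a genuine gap: the deletion in Case 1 does not produce a valid schedule. Removing the return of $r$ leaves the torch on $\mathsf{R}$ after the forward move just before it, while the next move of the original schedule starts from $\mathsf{L}$. You say feasibility holds because ``$r$'s presence on $\mathsf{R}$ was never used to carry anyone'', but it was used to carry the torch back.

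Any repair needs a return by someone then on $\mathsf{R}$. Those people are the original occupants minus $r$. This set is nonempty, since the forward move just before that return brought someone other than $r$. Every member is slower than $r$, precisely because $r$ was the fastest person there, and whoever returns must cross again later. So the claimed saving of at least $r$ is not established.

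A sanity check shows the argument proves too much. Nothing in Case 1 uses $r\ge 5$, so it applies verbatim to $r=2$ in the schedule $\{1,2,3\}\to\mathsf{R}$, $1\to\mathsf{L}$, $\{4,5,6\}\to\mathsf{R}$, $2\to\mathsf{L}$, $\{1,2\}\to\mathsf{R}$. This schedule has cost $14=T_3(6)$, and in it person $2$ crossed with the faster person $1$, who returned first. Your modification gives $\{1,3\}\to\mathsf{R}$, $1\to\mathsf{L}$, $\{4,5,6\}\to\mathsf{R}$, $\{1,2\}\to\mathsf{R}$. Its nominal cost is $12$, but it is infeasible. The cheapest valid completion of the modified prefix (return $3$, then send $\{1,2,3\}$) costs $16$.

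Case 2 has a related defect when the two crossings of $r$ are not consecutive. Deleting the return of $r$ makes two forward moves adjacent, and their union can contain up to $6>3$ people. Matching torch sides does not license that merge.

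What is missing is an exchange that \emph{supplies} a cheaper returner instead of deleting one. It must come with a cost comparison in which $r\ge 5$ is genuinely used. Returns by person $2$ do occur in optimal schedules (as above), so any correct argument must break down for small $r$. In your write-up the threshold appears only in an informal remark about ferrying strategies.

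The paper argues in that direction. It stages fast support ($\{1,2,3\}\to\mathsf{R}$, $1\to\mathsf{L}$, cost $4$) so that person $2$ can make the return at cost $2$. It weighs this against the slow returner's round trip, which costs at least $2t_r\ge 10$. The paper gives this exchange only in outline, so a rigorous version would still have to check how the staging moves are spliced into the schedule and how later returns are rerouted. Even so, that comparison is where the bound $4$ comes from. The capacity bookkeeping you single out at the end is therefore not the main obstacle. The real obstacle is paying for the return that the torch still requires.
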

\begin{proof}
    Consider an optimal schedule. By Lemma \ref{lemma fastest person always returns},
    whenever a return from $\mathsf{R}$ to $\mathsf{L}$ is required,
    the returning person must be the fastest person
    present on $\mathsf{R}$.
    
    Suppose, for contradiction, that at some such return the fastest person on $\mathsf{R}$
    has crossing time $t_r \ge 5$.
    In order for this person to be available on $\mathsf{R}$ and then to return,
    they must have crossed from $\mathsf{L}$ to $\mathsf{R}$ at some earlier time
    and now cross back, incurring a total cost of at least $2t_r \ge 10$.
    
    We show that this situation cannot occur in an optimal schedule.
    Instead of allowing the person with crossing time $t_r$ to be the fastest available
    returner, we may stage faster support earlier at strictly lower cost.
    Specifically, send persons $1$, $2$ and $3$ from $\mathsf{L}$ to $\mathsf{R}$,
    then return person $1$ to $\mathsf{L}$.
    This leaves person $2$ and $3$ on $\mathsf{R}$ with the torch on $\mathsf{L}$,
    having incurred total cost $4$.
    
    After this modification, whenever a return is required,
    person $2$ is available to return the torch at cost $2$,
    which is strictly cheaper than returning a person with cost $t_r \ge 5$.
    Thus any schedule in which the fastest available returner has crossing time at least $5$
    can be transformed into one of strictly lower total cost.
    
    This contradicts optimality. Hence, in any optimal schedule,
    whenever a return from $\mathsf{R}$ to $\mathsf{L}$ is required,
    the fastest person available to return has crossing time at most $4$.
\end{proof}

We are now in a position to prove a recurrence relation.

\begin{proposition}\label{proposition recurrence t3}
    For $n \ge 13$, we have $T_3(n) = T_3(n-6) + 2n + 6$.
\end{proposition}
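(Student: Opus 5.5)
We prove the recurrence by squeezing: first the upper bound $T_3(n)\le T_3(n-6)+2n+6$ from an explicit six-person block, then the matching lower bound $T_3(n)\ge T_3(n-6)+2n+6$ from a structural (exchange) argument. Throughout, $n\ge 13$ guarantees that the six slowest persons $\{n-5,\dots,n\}$ are disjoint from the potential returners $\{1,2,3,4\}$ singled out in Lemma \ref{lemma fastest person ct}, and that at least $7$ people remain afterwards. This is the regime where the closed form is claimed to hold, with $T_3(7),\dots,T_3(12)$ as the seeds.

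\emph{Upper bound.} Starting from the canonical state $(\{1,\dots,n\},\mathsf{L})$, perform the following six moves:
\begin{itemize}
\item $\{1,2,3\}\to\mathsf{R}$, cost $3$;
\item $1\to\mathsf{L}$, cost $1$;
\item $\{n-2,n-1,n\}\to\mathsf{R}$, cost $n$;
\item $2\to\mathsf{L}$, cost $2$;
\item $\{n-5,n-4,n-3\}\to\mathsf{R}$, cost $n-3$;
\item $3\to\mathsf{L}$, cost $3$.
\end{itemize}
These are all legal moves. Their total cost is
\[
3+1+n+2+(n-3)+3=2n+6.
\]
The resulting state is $(\{1,\dots,n-6\},\mathsf{L})$, which is canonical, so $T_3(n)\le T_3(n-6)+2n+6$.

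\emph{Lower bound.} Fix an optimal schedule. By Lemma \ref{lemma fastest person always returns} and Lemma \ref{lemma fastest person ct}, every return trip is made by a single person from $\{1,2,3,4\}$. We may therefore assume none of $n-5,\dots,n$ ever returns, so each of them crosses $\mathsf{L}\to\mathsf{R}$ exactly once.

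The forward trips carrying these six persons number at least two, since each carries at most $3$ people. The most expensive such trip costs at least $n$. The second, which must contain the slowest person not on the first, costs at least $n-3$. So the slow six contribute at least $2n-3$.

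Next we account for the overhead: the ``extra'' cost of $9$, namely the staging cost $3$ and the returns $1+2+3$ in the construction. The plan is to charge, to each of these two heavy forward trips, the cheapest returns and helper crossings that are forced around it. Each heavy trip other than the last one of the schedule must be followed by a return, costing at least $1$, and a returner must previously have been placed on $\mathsf{R}$. We then delete the six slow persons from the schedule and merge the heavy trips into adjacent trips where possible. The aim is to show that the remaining schedule is a valid schedule for $\{1,\dots,n-6\}$ whose cost is at most the original cost minus $(2n+6)$.

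\emph{Main obstacle.} The difficult step is proving that the overhead really is at least $9$, rather than something smaller obtained by interleaving the slow persons with medium-speed persons in mixed trips. The lower-bound argument in the proof of Theorem \ref{theorem one bridge one torch with crossing times 1 to n} was written for capacity $2$ and does not transfer verbatim. For this step I would do a case analysis on how many of the six slow persons share trips with people from $\{n-11,\dots,n-6\}$. The key point to establish is that any such mixing only shifts cost between the $T_3(n-6)$ part and the block, and never lowers the total. If this exchange argument proves too delicate, the fallback is to verify the recurrence directly for $n=13,\dots,18$ by the complete search used for Proposition \ref{proposition initial val t3}, and then argue that the structure of an optimal schedule is periodic with period $6$ beyond that point.
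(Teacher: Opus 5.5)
Your upper bound is the paper's six-move block and is fine. The lower bound, however, is not proved. The overhead of $9$ is only stated as a goal, and the delete-and-merge surgery is never carried out. The fallback (exhaustive search for $n=13,\dots,18$, then an argument that optimal schedules are $6$-periodic) presupposes exactly what is to be shown.

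More importantly, the ingredients you invoke cannot suffice, because all of them are present at $n=10$ and $n=12$, where $T_3(n)\ge T_3(n-6)+2n+6$ is false. By the paper's own values, $T_3(12)=43<44=T_3(6)+30$ and $T_3(10)=32<33=T_3(4)+26$. For $n=12$, the following schedule costs $43$: $\{1,2,3\}\to\mathsf{R}$, $1\to\mathsf{L}$, $\{10,11,12\}\to\mathsf{R}$, $2\to\mathsf{L}$, $\{1,2,4\}\to\mathsf{R}$, $1\to\mathsf{L}$, $\{7,8,9\}\to\mathsf{R}$, $2\to\mathsf{L}$, $\{1,5,6\}\to\mathsf{R}$, $1\to\mathsf{L}$, $\{1,2\}\to\mathsf{R}$. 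The six slowest persons are disjoint from $\{1,2,3,4\}$, never return, and travel as two pure triples of cost $12$ and $9$, with no mixing with medium-speed persons. The saving comes instead from the trip $\{1,2,4\}$, which re-stages the returner $2$ while also ferrying person $4$, so the support cost is shared with the residual instance. Deleting $\{7,\dots,12\}$ and merging the consecutive returns leaves a schedule for $\{1,\dots,6\}$ of cost $20$, a saving of only $23$. Since $T_3(6)=14$, no surgery could save $30$ here. So an ``overhead of at least $9$ attributable to the six slowest persons'' is not a local fact. Your planned case analysis, on mixing with $\{n-11,\dots,n-6\}$, targets the wrong interaction.

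The missing ingredient is an argument that genuinely uses $n-6\ge 7$. You list this among the roles of $n\ge 13$, but nothing in your sketch depends on it. A proof needs a global accounting of the support trips over the whole schedule. For instance, each returner crosses forward exactly once more than it returns; counting this way ties the number of pure slow trips to the number of trips carrying two or three returners. You would then need an optimisation showing that, once the residual instance has at least $7$ people, the resulting bound increases by exactly $2n+6$.

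For comparison, the paper's lower bound is a direct charging argument rather than a deletion argument. It counts two heavy trips of cost at least $n$ and $n-3$ plus support moves costing at least $4+2+3$. That argument also never invokes $n\ge 13$. It also tacitly assumes an optimal schedule passes through $S_{n-6}$, which the $n=12$ schedule above does not. So it cannot simply be borrowed to close your gap.
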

\begin{proof}
    We first prove that the upper bound $T_3\left(k\right)\le T_3\left(k-6\right)+2k+6$ holds by constructing a sequence of moves
    from $S_k$ to $S_{k-6}$.
    Starting from $S_k$, consider the valid sequence of moves in
    Figure \ref{fig:c3 block schedule}.
    \begin{figure}[ht!]
        \centering
        \begin{tabular}{|c|c|c|c|c|}
            \hline
            Step & Direction & People & Time & Cumulative \\
            \hline
            1 & \textsf{L} to \textsf{R} & 1, 2, 3 & 3 & 3 \\
            \hline
            2 & \textsf{R} to \textsf{L} & 1 & 1 & 4 \\
            \hline
            3 & \textsf{L} to \textsf{R} & $k-2$, $k-1$, $k$ & $k$ & $k+4$ \\
            \hline
            4 & \textsf{R} to \textsf{L} & 2 & 2 & $k+6$ \\
            \hline
            5 & \textsf{L} to \textsf{R} & $k-5$, $k-4$, $k-3$ & $k-3$ & $2k+3$ \\
            \hline
            6 & \textsf{R} to \textsf{L} & 3 & 3 & $2k+6$ \\
            \hline
        \end{tabular}
        \caption{A sequence of moves from $S_k$ to $S_{k-6}$.}
        \label{fig:c3 block schedule}
    \end{figure}
    After these moves, exactly the six people $\{k-5,k-4,k-3,k-2,k-1,k\}$ are on the right bank, while
    $\{1,\ldots,k-6\}$ remain on the left bank and the torch is on the left; i.e. we have reached $S_{k-6}$. The total cost of these six moves is $3+1+k+2+(k-3)+3=2k+6$. Any optimal solution must cost no more than $2k+6$ units of time, establishing the upper bound $T_3(k)\le (2k+6)+T_3(k-6)$.

    We now prove a lower bound on the cost incurred by returns that must be performed 
    without using the six slowest persons. This is possible due to Lemma \ref{lemma fastest person ct}, which ensures that in an optimal schedule, whenever a return of 
    the torch is required, the fastest available returner has crossing time at most~$4$.

    To reduce the state from $S_k$ to $S_{k-6}$, there must be two expensive 
    left-to-right moves, of cost at least $k$ and $k-3$, respectively. Each such move 
    necessarily places the torch on the right bank while the process is not yet 
    finished, and hence must be followed by a return. Moreover, after the first such 
    return, at least one person must remain on the right bank in order to permit a 
    further return following the second expensive move. Consequently, between these two 
    expensive left-to-right moves, there must be three distinct return trips.
    
    By Lemma \ref{lemma fastest person ct}, each of these returns must be carried out by 
    a person of crossing time at most~$4$. We now show that, regardless of how the 
    corresponding support persons are staged, these three returns incur a total cost of 
    at least $4+2+3$.
    
    Indeed, consider the earliest time at which the torch is on the left bank and at 
    least two persons are present on the right bank. Achieving such a configuration 
    costs at least~$4$: at some earlier point, at least three persons must have crossed 
    to the right in a single move (otherwise, after the necessary return that places the 
    torch back on the left, at most one person would remain on the right). Any such move 
    costs at least~$3$, and the subsequent return costs at least~$1$.
    
    After the first expensive left-to-right move, the torch must return to the left bank while leaving at least one support person on the right. The minimum possible cost of such a return is~$2$. After the second expensive left-to-right move, a further return is required; at this point, at least two fast persons have already been used for earlier returns, and hence the cheapest remaining support returner has crossing time at least~$3$.
    
    Therefore, irrespective of the timing of support staging, the total cost incurred by the necessary return operations is at least $4+2+3=9$. Putting these unavoidable contributions together, before the process can reduce from $S_k$ to $S_{k-6}$ it must incur total cost at least $k+(k-3)+9 = 2k+6$. Hence equality holds and the recurrence follows.
\end{proof}

A direct check (by exhaustive search on the finite state space) gives
\[
T_3(7)=17,\quad T_3(8)=22,\quad T_3(9)=26,\quad T_3(10)=32,\quad T_3(11)=37,\quad T_3(12)=43.
\]
Fix $r\in\{0,1,2,3,4,5\}$ and write $n=6q+r$ with $q\ge 1$ and $n\ge 7$.
Iterating Proposition \ref{proposition recurrence t3} along the arithmetic progression $6q+r,\,6(q-1)+r,\,\ldots$ yields a quadratic in $q$,
particularly $T_3(6q+r)=6q^2+(2r+12)q+c_r$ for suitable constants $c_r$ determined by the above base block.
Equivalently (with $r$ interpreted in $\{0,\ldots,5\}$) one obtains the piecewise formula in residue classes mod $6$.

We are now in a position to prove Theorem \ref{theorem capacity c optimal}
by finding a closed formula encoding Propositions \ref{proposition initial val t3}
and \ref{proposition recurrence t3}. Define
\[
E(n)=T_3(n)-\frac{n^2}{6}-2n.
\]
Using Proposition \ref{proposition recurrence t3} and the identity
\[
\left(\frac{n^2}{6}+2n\right)-\left(\frac{(n-6)^2}{6}+2(n-6)\right)=2n+6,
\]
we get $E(n)=E(n-6)$ for all $n\ge 13$, hence $E(n)$ is $6$-periodic on $n\ge 7$. Therefore, $E(n)$ is determined by its values on $n=7,8,9,10,11,12$. Since $E(n)$ has period $6$, it can be expressed using a period-$2$ term and a period-$3$ term as follows:
\[
E(n)=A+B(-1)^n+C\cos\left(\frac{2\pi n}{3}\right)\quad\text{where }A,B,C\in\mathbb{R}.
\]
Substituting $n=7,8,9$ and solving the resulting $3\times 3$ linear system gives $A=-\frac{181}{36}$, $B=\frac{1}{4}$, and $C=-\frac{2}{9}$. Hence for all $n\ge 7$,
\[T_3(n)=\frac{n^2}{6}+2n-\frac{181}{36}+\frac{(-1)^n}{4}-\frac{2}{9}\cos\!\left(\frac{2\pi n}{3}\right),\]
proving Theorem \ref{theorem capacity c optimal}. The values $T_3(1)=1,\ldots,T_3(6)=14$ are as stated in Proposition \ref{proposition initial val t3}.
\begin{example}[$c=3$] When $c=3$, applying Theorem \ref{theorem capacity c optimal}
generates the sequence of values for $T_3(n)$ as follows:
\[
\begin{array}{c|cccccccccccccccccccc}
n&1&2&3&4&5&6&7&8&9&10&11&12&13&14&15&16&17&18&19&20\\\hline
T_3\left(n\right)&1&2&3&7&9&14&17&22&26&32&37&43&49&56&62&70&77&85&93&102
\end{array}
\]
\end{example}

\begin{example}[$c=4$] When $c=4$, by complete
search one generates the sequence
\[
\begin{array}{c|cccccccccccccccccccc}
n&1&2&3&4&5&6&7&8&9&10&11&12&13&14&15&16&17&18&19&20\\\hline
T_4\left(n\right)&1&2&3&4&8&10&12&17&20&23&28&32&36&42&47&52&58&64&70&77
\end{array}
\]
\end{example}
Note that $T_c\left(n\right)$ is most interesting for small capacities $c$. Indeed, as $c$ grows relative to $n$, the constraint becomes weak: if $c\ge n$, everyone can cross in a single move, so $T_c\left(n\right)=n$.
It is most likely difficult to study a formula for $c = 4$ and beyond, especially
given the complex nature of the recurrence derived. However we conjecture
that, for small $c$, there is a closed formula or at least a recurrence
for $T_c(n)$.

\section{A generalization to star graphs}
As we have seen, the bridge and torch problem usually involves a single bridge with limited capacity. The task is to decide who crosses together, who returns with the torch, and in what order, to finish as soon as possible. Now, consider a setting with more than one bridge. At a node, some travelers go in different direction. The goal is to schedule movements so that everyone reaches a destination quickly.

Here, we will discuss some applications to star graphs (Definition \ref{definition of star graph}). In such a graph, travellers typically start at the root and must reach the leaves.
\begin{definition}[star graph]\label{definition of star graph} Fix an integer $k\ge 1$. The star graph with $k$ edges, denoted by $\mathcal{S}(k)$, is the graph with vertex set $V=\{s\}\cup\{u_1,\ldots,u_k\}$ where $s$ is the center and $u_1,\ldots,u_k$ are the leaves. We define the edge set to be $E=\{\{s,u_i\}:i=1,\ldots,k\}$. Equivalently, $S(k)$ is the unique graph on $k+1$ vertices in which $\deg(s)=k$ and
$\deg(u_i)=1$ for all $i=1,\dots,k$, up to graph isomorphism.
\end{definition}
Essentially, one can think of a star graph as a hub-and-spoke system where there is a single root node and multiple child nodes connected only to the root node. We will work on star graphs up to rotation. For example, Figure \ref{fig:star graph with 7 edges} shows a star graph with root/center $s$ and edges $\left\{s,u_i\right\}$ where $i=1,\ldots,7$.
\begin{figure}[H]
    \centering
\begin{tikzpicture}[
    every node/.style={circle,draw,fill=white,minimum size=7mm,font=\sffamily},
    edge/.style={draw,thick},scale=0.8
]

\node (s) at (0,0) {$s$};

\foreach \i/\angle in {1/90,2/38,3/-14,4/-66,5/-118,6/-170,7/142} {
    \node (u\i) at (\angle:3cm) {$u_{\i}$};
    \draw[edge] (s) -- (u\i);
}

\end{tikzpicture}
    \caption{The star graph $\mathcal{S}(7)$ with 7 edges}
    \label{fig:star graph with 7 edges}
\end{figure}

We now formally define the problem for star graphs. 
\begin{definition}[parallel step constraints on a star] For the bridge and torch problem on star graphs, a \emph{parallel step} is a collection of torch-movements executed simultaneously,
satisfying the following conditions:
\begin{itemize}
    \item Disjoint people: no person participates in more than one torch-movement in that step
    \item Disjoint star edges: no star edge is traversed by more than one torch-movement in that step
\end{itemize}
The duration of a parallel step is the maximum duration among its torch-moves.
\end{definition}

\begin{theorem}[bridge and torch problem for star graphs]\label{theorem: bridge and torch for star graphs}
Let $T(n,k,t)$ denote the minimum total time to transport persons $\left\{1,\ldots,n\right\}$ with crossing times $1<2<\cdots<n$, where the center of the star graph $\mathcal{S}(k)$ is $r$ and the leaves are $u_1,\ldots, u_k$. Say the $n$ persons and the $t$ torches start at $r$ and each torch-move carries one or two people along one edge and every parallel step uses disjoint people and disjoint star edges. Then, \begin{align}\label{eqn: star graph lower and upper bounds}
\frac{n^2}{4\min \{k,t\}}+O(n)\le T(n,k,t)\le \frac{n^2}{2}+O(n).
\end{align}
In particular, \[T(n,k,t)\ge sn-ms(s-1)\quad\text{where }m=\min\{k,t\} \text{ and }s=\left\lceil \frac{n}{2m}\right\rceil\]
and equality holds if $\lceil \frac{n}{2}\rceil\le t\le n $.
\end{theorem}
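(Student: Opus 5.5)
The plan is to prove the lower bound by a counting argument on parallel steps, and to get the upper bound from the single-bridge schedule of Theorem \ref{theorem one bridge one torch with crossing times 1 to n}.

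\textbf{Capacity of one step.} Every torch-move needs its own torch and its own star edge, since movements in a parallel step use disjoint star edges. So a parallel step contains at most $m=\min\{k,t\}$ torch-moves. Each torch-move carries at most two people, so one parallel step transports at most $2m$ people in total. A step that carries a person with crossing time $d$ has duration at least $d$.

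\textbf{Lower bound.} Every person starts at $r$ and must end at a leaf, so each person takes part in at least one outward move. Fix an optimal schedule. For each person, pick one step in which that person makes an outward move. Order the chosen steps by duration, $D_1\ge D_2\ge\cdots$. I claim $D_j\ge n-2m(j-1)$ for $1\le j\le s$, where $s=\lceil n/(2m)\rceil$.
\begin{itemize}
\item The $2m(j-1)+1$ people $n,n-1,\ldots,n-2m(j-1)$ cannot all be assigned to the $j-1$ longest chosen steps, because those steps carry at most $2m(j-1)$ people.
\item So some person with crossing time at least $n-2m(j-1)$ lies in a chosen step of index at least $j$.
\item Hence that step, and therefore $D_j$, has duration at least $n-2m(j-1)$.
\end{itemize}
Distinct chosen steps are distinct parallel steps, and any return steps only add time. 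Summing gives
\[
T(n,k,t)\ge\sum_{j=1}^{s}\bigl(n-2m(j-1)\bigr)=sn-ms(s-1).
\]
With $s=n/(2m)+O(1)$ this becomes $n^2/(2m)-n^2/(4m)+O(n)=n^2/(4m)+O(n)$, which is the left-hand inequality of (\ref{eqn: star graph lower and upper bounds}).

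\textbf{Upper bound.} Use a single torch on a single edge $\{r,u_1\}$ and run the optimal schedule from Theorem \ref{theorem one bridge one torch with crossing times 1 to n}, treating $r$ as the left bank and $u_1$ as the right bank. This is a valid schedule with one torch-move per parallel step. It costs $T(n)=n^2/4+O(n)$, which is at most $n^2/2+O(n)$.

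\textbf{Equality case.} If $m\ge\lceil n/2\rceil$, then $s=1$ and the lower bound equals $n$. It is attained in one parallel step: pair the people as $\{1,2\},\{3,4\},\ldots$, giving at most $\lceil n/2\rceil$ groups. Send each group with its own torch along its own edge. The step lasts $\max=n$, and no torch ever needs to return.

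The main obstacle is exactly this equality case. The construction needs $\lceil n/2\rceil$ distinct edges as well as that many torches, so it requires $k\ge\lceil n/2\rceil$ in addition to $t\ge\lceil n/2\rceil$. If $k<\lceil n/2\rceil$ then $m=k$ and $s>1$. In that regime I would try to reach the bound with $s$ outward steps, grouping people into blocks of $2m$ from the top. But these steps are separated by torch returns, which cost at least $1$ each and which the counting bound ignores, so equality may genuinely fail there. The first thing I would try is to read the equality clause under the standing assumption $k\ge t$, so that $m=t\ge\lceil n/2\rceil$, and to state that assumption explicitly.
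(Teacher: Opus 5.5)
\textbf{Lower and upper bounds.} These parts are sound. Sorting the chosen outward steps by duration is the right way to run the counting argument. The paper instead orders outward steps chronologically and asserts that the $j$th one lasts at least $n-2m(j-1)$. That is not literally true, since the first outward step need not contain any of $n-2m+1,\dots,n$; it needs exactly your rearrangement. Running the schedule of Theorem~\ref{theorem one bridge one torch with crossing times 1 to n} on one edge, instead of the paper's person-$1$ shuttle, is also fine, since an upper bound only needs an explicit schedule.

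\textbf{The gap: the equality clause.} You establish it only when $s=1$. You then suggest it may fail when $k<\lceil n/2\rceil$ and propose adding the hypothesis $k\ge t$. That proves a weaker statement, and the worry is unfounded.

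The missing observation is that $t\ge\lceil n/2\rceil$ makes torch returns unnecessary altogether:
\begin{itemize}
\item A torch that reaches a leaf never has to come back.
\item The disjoint-edge condition applies only within a single parallel step, so later steps may reuse the same edges with fresh torches.
\end{itemize}

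Here is the construction. If $s>1$ then $m=k<\lceil n/2\rceil\le t$. For $j=1,\dots,s$, let $G_j$ be the persons $i$ with $n-2mj<i\le n-2m(j-1)$. In step $j$, send $G_j$ out in pairs on torches still at $r$, each pair along its own edge; this takes at most $m=k$ torch-moves. The total number of torches consumed is
\[
k(s-1)+\left\lceil \frac{n-2k(s-1)}{2}\right\rceil=\left\lceil \frac{n}{2}\right\rceil\le t,
\]
so fresh torches are always available and there are no return steps at all. The cost is exactly $\sum_{j=1}^{s}\bigl(n-2m(j-1)\bigr)=sn-ms(s-1)$, which matches your lower bound.

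For example, take $T(5,2,3)$. Send $\{4,5\}$ and $\{2,3\}$ on two edges (time $5$), then $\{1\}$ on a third torch (time $1$), for a total of $6$. This is the $k=2$, $n=5$ entry in the paper's table, and the construction is the paper's own.
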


Before discussing the proof of Theorem \ref{theorem: bridge and torch for star graphs}, we look at some important cases. First, if $m=\min\{k,t\}$ is large enough such that $m\ge \lceil \frac{n}{2}\rceil$, then $s=1$ and $T(n,k,t)=n$. On the other hand, if $k=1$ and $t\ge \lceil\frac{n}{2}\rceil$, then $m=1$ and $s=\lceil\frac{n}{2}\rceil$, which implies \[T(n,1,t)=\left\lfloor \frac{(n+1)^2}{4}\right\rfloor.\]
By considering the expression in (\ref{eqn:formula for T(n)}), we see that for all $n\ge 2$, \[T(n,1,t)=\left\lfloor \frac{(n+1)^2}{4}\right\rfloor\le \left\lfloor \frac{n^2+12n-20}{4}\right\rfloor=T(n).\]
This makes sense because the conventional bridge and torch problem only allows one torch.

In the general setting where $t = \lceil \frac{n}{2}\rceil$, and $k \ge 2$ and $n$ vary, a query of the values in Figure \ref{fig values star tnkfloor c2} on the OEIS \cite{OEIS2026} yields sequences about partial sums of floors with no comments or connections to the bridge and torch problem. For example, \seqnum{A130519} states that the sequence of numbers $1,2,3,4,6,8,10,12,\ldots$ is \[\sum_{j=0}^{n+3}\left\lfloor \frac{j}{4}\right\rfloor\]
which by our discussion in Theorem \ref{theorem: bridge and torch for star graphs}, is equal to \[\left\lceil \frac {n}{4}\right\rceil\left(n-2\left(\left\lceil \frac {n}{4}\right\rceil-1\right)\right).\]
We have thus recovered the following identity:
\[\sum_{j=0}^{n}\left\lfloor \frac{j}{4}\right\rfloor=\left\lceil \frac {n-3}{4}\right\rceil\left(n-3-2\left(\left\lceil \frac {n-3}{4}\right\rceil-1\right)\right).\]
Similar identities for different $2k$ can be
found in this manner.
\begin{figure}[H]
    \centering
    \begin{tabular}{|c|c|c|}
        \hline
        $k$ & Sequence values for $n=1,\dots,20$ & Related seq. \\
        \hline
        2 & 1, 2, 3, 4, 6, 8, 10, 12, 15, 18, 21, 24, 28, 32, 36, 40, 45, 50, 55, 60
        & \seqnum{A130519} \\
        \hline
        3 & 1, 2, 3, 4, 5, 6, 8, 10, 12, 14, 16, 18, 21, 24, 27, 30, 33, 36, 40, 44
        & \seqnum{A174709} \\
        \hline
        4 & 1, 2, 3, 4, 5, 6, 7, 8, 10, 12, 14, 16, 18, 20, 22, 24, 27, 30, 33, 36
        & \seqnum{A118729} \\
        \hline
        5 & 1, 2, 3, 4, 5, 6, 7, 8, 9, 10, 12, 14, 16, 18, 20, 22, 24, 26, 28, 30
        & \seqnum{A117804} \\
        \hline
        6 & 1, 2, 3, 4, 5, 6, 7, 8, 9, 10, 11, 12, 14, 16, 18, 20, 22, 24, 26, 28
        & \seqnum{A221912} \\
        \hline
    \end{tabular}
    \caption{Values of $T(n,k,\lceil\frac{n}{2}\rceil)$
    for varying $n=1,\dots,20$ and $k=2,\dots,6$}
    \label{fig values star tnkfloor c2}
\end{figure}

We now give a proof of Theorem \ref{theorem: bridge and torch for star graphs}.
\begin{proof}
Let $m=\min \{k,t\}$. Since each parallel step must use disjoint star edges, at most one torch can traverse each edge $\{0,i\}$ in a given step for all $1\le i \le k$. Hence, regardless of how many torches are available, at most $k$ torches can move out of (or into) the center $0$ simultaneously. On the other hand, we only have $t$ torches, so in any parallel step, we can exclude at most $m=\min\{k,t\}$ torch-moves that leave the center. 

Moreover, each bridge in $\mathcal{S}(k)$ is assumed to have capacity $2$, so in a single parallel step, we can transport at most $2m$ persons from the center to the leaves. As such, to transport all $n$ people from $r$ to $\{1,\dots,k\}$, we need at least $s=\lceil \frac{n}{2m}\rceil$ parallel steps that contain at least one outward move from $r$.

We first prove the lower bound. In the first outward step (the first step in which some person leaves $r$), at most $2m$ people can leave $r$, so at least one of the people $\{n-2m+1,\dots,n\}$ must move outward in that step. Hence the duration of that step is at least $n$. Similarly, after $j-1$ outward steps, at most $2m(j-1)$ people have been transported to leaves. Thus, before the $j^\text{th}$ outward step begins, at least one of the people
\[\{n-2m(j-1),n-2m(j-1)+1,\dots,n\}\]
must still be at the center, and therefore the duration of the $j^\text{th}$ outward step is at least $n-2m(j-1)$. Summing over $j=1,2,\ldots,s$ gives
\[T(n,k,t) \ge \sum_{j=1}^{s}(n-2m(j-1))=sn-2m\sum_{j=1}^{s}(j-1)=sn-m s(s-1).\]
Since $s=\left\lceil \frac{n}{2m}\right\rceil$, this bound implies in particular
\[T(n,k,t) \ge \frac{n^2}{4m}+O(n)\]
because $s=\frac{n}{2m}+O(1)$ and hence $sn-ms(s-1)=\frac{n^2}{4m}+O(n)$.

As for the upper bound, we give a concrete (but not necessarily optimal) schedule of total time $\frac{n^2}{2}+O(n)$ that works for all $k,t\ge 1$. Fix any leaf, say $u_1$. Use a single torch to shuttle people to leaf $1$ using person 1 as the returner: for each $i=2,3,\dots,n$, we send person 1 and $i$ from the center $r$ to the leaf $u_1$ with cost $i$, and then person 1 from $u_1$ to the center $r$ with cost 1. After $i=n$, we finally move person $1$ back to $u_1$ with cost 1. This yields the total time \[\sum_{i=2}^{n}i+(n-2)\cdot 1+1=\frac{n^2+3n-4}{2}=\frac{n^2}{2}+O(n).\]
This proves the upper bound as in (\ref{eqn: star graph lower and upper bounds}).

As for equality, we now assume that $t\ge \lceil n/2\rceil$. Then, we can transport all people in one-shot outward-only batches without ever returning a torch to the center. Indeed, in one parallel step, each outward torch-movement can carry two persons so with $t$ torches, we can move up to $2t\ge n$ people outward in that single step. If $k\ge t$, we assign each torch to a distinct edge; if $k<t$, then $m=\min\{k,t\}=k$ and we instead use the $k$ available edges (still each at most once per step) to move up to $2k=2m$ people outward per step. In either case, the number of outward moves per step is bounded by $m$, so we can move $2m$ people outward per step.

More concretely, let $s=\lceil\frac{n}{2m}\rceil$. We partition the set of people into $s$ groups, each of size at most $2m$, and in step $j$ move exactly that group from $r$ to \textbf{what} leaves using $m$ torches (some torches may carry only one person if needed). Arrange the groups in decreasing order of their largest label, so that the maximum crossing time in step $j$ is exactly $n-2m(j-1)$. Then, the total time of this schedule is
\[\sum_{j=1}^{s}(n-2m(j-1))=sn-ms(s-1).\]
Since we already proved the lower bound earlier, this schedule is optimal and hence equality holds. That is, $T(n,k,t)=sn-ms(s-1)$.
\end{proof}

\section{Acknowledgments}
We would like to thank our friends Loh Wei Xuan Ryan and Malcolm Tan Jun Xi from the National University of Singapore for looking through this preprint. The second
author thanks Chuanjie Duanmu from River Valley High School for providing assistance
with programming in SageMath \cite{sagemath}. Moreover, we would also like to thank Beno\^{i}t Corsini from the National University of Singapore for providing invaluable guidance.

\printbibliography

\end{document}